\author{Neea Paloj\"{a}rvi\footnote{Supported by the Finnish Cultural Foundation.}\, and Tim Trudgian\footnote{Supported by Australian Research Council Discovery Project DP240100186.} \\
School of Science, UNSW Canberra, Australia \\
 n.palojarvi@unsw.edu.au\quad  \quad timothy.trudgian@unsw.edu.au }
  \title{On the error term of the fourth moment of the Riemann zeta-function}
\newtheorem{thm}{Theorem}
\newtheorem{Lem}{Lemma}
\newtheorem{remark}{Remark}
\newtheorem{cor}{Corollary}
\begin{document}
\maketitle
\begin{abstract}
\noindent
We examine the size of $E_{2}(T)$, the error term in the asymptotic formula for $\int_{0}^{T} |\zeta(1/2 + it)|^{4}\, dt$ where $\zeta(s)$ is the Riemann zeta-function. We make improvements in the powers of $\log T$ in the known bounds for $E_{2}(T)$ and $\int_{0}^{T} E_{2}(t)^{2}\, dt$. As a consequence, we obtain small logarithmic improvements for $k$th moments where $8\leq k\leq 12$. In particular, we make a modest improvement on the 12th power moment for $\zeta(s)$.
\end{abstract}
\section{Introduction}\label{leubald}
\noindent
Consider the fourth moment of the Riemann zeta-function
\begin{equation}\label{als}
\int_{0}^{T} |\zeta(1/2 + it)|^{4} \, dt = T P_{4}(\log T) + E_{2}(T),
\end{equation}
where $P_{4}(x)$ is a quartic polynomial, the coefficients of which are known. In this paper we consider three estimates involving $E_{2}(T)$.

The first is the size of $E_{2}(T)$ itself. Heath-Brown \cite{RHB4}, proved (\ref{als}) and showed that $E_{2}(T) \ll T^{7/8 + \epsilon}$. This was improved by Ivi\'{c} and Motohashi \cite{Ivic91,IvicM90,IvicM95} to
\begin{equation}\label{fraulenist}
E_{2}(T) \ll T^{2/3} (\log T)^{C_{1}},
\end{equation}
for some constant $C_{1}$.

The second is the mean-square of $E_{2}(T)$, which was also studied by Ivi\'{c} and Motohashi, who showed in \cite{IM94} that
\begin{equation}\label{laune}
\int_{0}^{T} E_{2}(t)^{2} \, dt\ll T^{2} \log^{C_{2}} T,
\end{equation}
for some $C_{2}$.

The third is the twelfth-power moment estimate for $\zeta(s)$. Heath-Brown \cite{RHB12} proved that
\begin{equation}\label{hochzeit}
\int_{0}^{T} |\zeta(1/2 + it)|^{12}\, dt \ll T^{2} \log^{17} T.
\end{equation}
At first glance, Heath-Brown's result does not involve $E_{2}(T)$, and, indeed, his proof involves the mean-square of $\zeta(s)$ in short intervals, and not the fourth-power moment in (\ref{als}). More general methods can be used: in 2007, Sankaranarayanan \cite{Sank} proved a version of (\ref{hochzeit}) with $33$ instead of $17$.

Iwaniec \cite{Iwaniec} was the first to prove an appropriate short-interval version of (\ref{als}) and to apply this to the twelfth-power estimate --- see also \cite{IvicB,IvicHardy}. This leads to $\int_{0}^{T} |\zeta(1/2 + it)|^{12}\, dt \ll T^{2 + \epsilon}$. Ivi\'{c} and Motohashi \cite{IM94,IvicM95} were able to refine some aspects of Iwaniec's method to improve this to an estimate of the form
\begin{equation*}\label{feen}
\int_{0}^{T} |\zeta(1/2 + it)|^{12}\, dt \ll T^{2} \log^{C_{3}} T,
\end{equation*}
where
\begin{equation*}\label{oder}
C_{3} = 8 + C_{2},
\end{equation*}
with $C_{2}$ coming from (\ref{laune}).
This leads us to investigate the size of $C_{2}$ in (\ref{laune}).

Motohashi \cite{MotohashiB} proved that one can take $C_{2} = 22$, and, via similar methods, $C_{1} = 8$ in (\ref{fraulenist}). In this article we show that we may take $C_{1} = 3.5$ and $C_{2} = 9$. In fact, we show slightly more in our main results below.

\begin{thm}
\label{thm:fourthMoment}
    For any constant $C\geq 0$, we have
    \begin{equation*}
        E_{2}(T) \ll T^{2/3} \left(\log{T}\right)^{3.5}(\log\log{T})^{-C}, \quad\text{as } T\to \infty.
    \end{equation*}
\end{thm}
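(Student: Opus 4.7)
The plan is to follow the Ivi\'c--Motohashi route via Motohashi's spectral decomposition of the smoothed fourth moment, while tracking with care the logarithmic factors that the original arguments lose. I would fix a smoothing scale $G = G(T)$ (ultimately $G \asymp T^{1/3}$) and a rapidly decaying weight $w$, and study
$$\mathcal{I}(T, G) = \int_{-\infty}^{\infty} \left|\zeta\!\left(\tfrac{1}{2} + i(T + u)\right)\right|^{4} w(u/G)\,\dif u.$$
A pointwise bound on $E_{2}(T)$ of the desired strength follows from the corresponding bound on $\mathcal{I}(T, G) - G\,\widehat{w}(0)\, P_{4}(\log T)$ via a Jutila-style differencing argument that compares $E_{2}(T \pm G)$ and uses the slow variation of the polynomial main term on the scale $G$.

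Next, I would apply Motohashi's explicit formula to express $\mathcal{I}(T, G)$ as a main term of size $G(\log T)^{4}$ plus three spectral contributions: a discrete Maass-form sum
$$S_{d} = \sum_{j \geq 1}\alpha_{j}\, H_{j}(1/2)^{3}\,\Theta_{T,G}(\kappa_{j}),$$
with a kernel $\Theta_{T,G}$ concentrated on $\kappa_{j} \asymp T/G$; a continuous-spectrum contribution bounded by a suitably weighted integral of $|\zeta(\tfrac{1}{2}+it)|^{6}$; and a holomorphic-forms contribution of lower order. The continuous piece is controlled by the known sixth-moment estimate and contributes fewer logs than the discrete one, so the latter dominates.

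The crux of the argument is a sharp mean-value estimate for the discrete sum. Splitting the spectrum dyadically into $K \leq \kappa_{j} \leq 2K$ with $K \lesssim T/G$, the target is
$$\sum_{K \leq \kappa_{j} \leq 2K}\alpha_{j}\, H_{j}(1/2)^{3} \ll K^{2}(\log K)^{a}$$
with $a$ as small as possible. I would aim for this by Cauchy--Schwarz, feeding in the sharpest available bound on $\sum \alpha_{j}H_{j}(1/2)^{4}$ together with the spectral large sieve of Deshouillers--Iwaniec, and exploiting the non-negativity of $H_{j}(1/2)$. Summing the dyadic pieces and balancing with $G \asymp T^{1/3}$ yields $E_{2}(T) \ll T^{2/3}(\log T)^{3.5}$. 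The main obstacle is that Motohashi's original proof loses superfluous logs at several junctures -- in the passage from the smoothed to the un-smoothed integral, in the choice of test function, and in a crude application of Cauchy--Schwarz on the spectral side -- and sharpening each of these by the needed amount is where the real work lies.

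Finally, the additional $(\log\log T)^{-C}$ factor is obtained by a refined smoothing: average the weight $w$ over an auxiliary scale of size $\log\log T$ and re-apply the discrete mean-value bound to the smoother average. Each such averaging gains one factor of $\log\log T$ in the spectral estimate, so iterating the construction $C$ times yields the claimed bound, with implicit constant depending on $C$.
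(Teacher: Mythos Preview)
Your overall architecture---smooth the fourth moment with a Gaussian-type weight at scale $G\asymp T^{1/3}$, pass to Motohashi's spectral decomposition, and bound the discrete Maass contribution $\sum_{j}\alpha_{j}H_{j}(1/2)^{3}\Theta_{T,G}(\kappa_{j})$ via a dyadic estimate for $\sum_{\kappa_{j}\leq K}\alpha_{j}H_{j}(1/2)^{3}$---is exactly the route the paper takes. But two of your specific choices would fail to deliver the stated exponent.

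First, your plan to bound the cubic Hecke moment by Cauchy--Schwarz against the fourth moment and the spectral large sieve only yields $\sum_{\kappa_{j}\leq K}\alpha_{j}H_{j}(1/2)^{3}\ll K^{2}(\log K)^{7/2}$, since the second moment carries one log and the fourth moment six. Feeding $A_{3}=7/2$ into the balancing $4.5+B_{1}=A_{3}-B_{1}/2$ for $\Delta=T^{2/3}(\log T)^{B_{1}}$ gives exponent $23/6$, not $3.5$. The paper does \emph{not} use Cauchy--Schwarz here; it invokes Ivi\'c's asymptotic formula (\cite{Ivic02}, so $A_{3}=3$ exactly), and it also sharpens the smoothing lemma so that the passage from the smoothed to the unsmoothed integral costs $O(\Delta\log^{4.5}T)$ rather than $O(\Delta\log^{5}T)$. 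Both ingredients are needed to land on $3.5$.

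Second, the $(\log\log T)^{-C}$ saving does not come from iterated averaging of the weight. The paper's mechanism is to cut the spectral sum at $\kappa_{j}\asymp T\Delta^{-1}(\log\log T)^{1/2}$ rather than at $T\Delta^{-1}(\log T)^{1/2}$: beyond this threshold the Gaussian factor $\exp(-(\Delta\kappa_{j}/2T)^{2})$ already contributes $\exp(-c\log\log T)=(\log T)^{-c}$, so the tail is negligible, while on the truncated range partial summation against the cubic-moment bound picks up only a $(\log\log T)^{1/4}$. One then absorbs this by inserting a factor $(\log\log T)^{B_{2}}$ into $\Delta$ and choosing $B_{2}$ in terms of $C$. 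Your ``average and iterate $C$ times'' idea is not obviously wrong, but as stated it is too vague to see where the gain comes from, and in any case it is a different mechanism from the one the paper actually uses.
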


\begin{thm}\label{die}
Let $A \in[0,2]$. Then for any constant $C\geq 0$, we have
\begin{equation*}
 \int_{0}^{T} E_{2}(t)^{A}\, dt \ll 
 T^{1+A/2}(\log{T})^{9A/2}\cdot
 \begin{cases}
  (\log\log{T})^{2A/3}, &\text{if } 0\leq A\leq 1.5, \\
  (\log\log{T})^{-C}, &\text{if } 1.5<A\leq 2, 
 \end{cases}\quad\text{as } T\to \infty.
\end{equation*}
\end{thm}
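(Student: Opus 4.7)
The plan is to reduce the $A$-th moment of $E_2$ to its mean-square via H\"older's inequality, and then to establish two mean-square bounds with different log-log strengths.

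For $A \in (0, 2)$, H\"older's inequality with conjugate exponents $p = 2/A$ and $q = 2/(2-A)$ gives
\[
\int_0^T |E_2(t)|^A \, dt \le \left(\int_0^T E_2(t)^2 \, dt\right)^{A/2} T^{1-A/2},
\]
and the endpoint cases $A = 0$ (trivial) and $A = 2$ (mean-square) are handled directly.

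The first mean-square estimate I would need, sharpened from the Ivi\'{c}--Motohashi argument behind \eqref{laune}, should take the form
\[
\int_0^T E_2(t)^2 \, dt \ll T^2 (\log T)^9 (\log\log T)^{4/3};
\]
feeding this into H\"older produces exactly the $(\log\log T)^{2A/3}$ factor claimed for $0 \le A \le 1.5$. The second, refined bound
\[
\int_0^T E_2(t)^2 \, dt \ll T^2 (\log T)^9 (\log\log T)^{-C} \quad (\text{any } C \ge 0)
\]
would be obtained by carrying the $(\log\log T)^{-C}$ pointwise decay of Theorem \ref{thm:fourthMoment} through the mean-square argument at the step where an $L^\infty$ estimate for $E_2$ is invoked. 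Substituted into H\"older, it supplies the $(\log\log T)^{-C}$ factor in the range $1.5 < A \le 2$.

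The main obstacle is the refined mean-square bound. Naively squaring Theorem \ref{thm:fourthMoment} and integrating produces a $T^{7/3}$-type term, which is far too large, so the log-log improvement must be injected inside the mean-square proof itself rather than appended afterwards. Once the two mean-square bounds are in hand, the H\"older interpolation is routine, and the split at $A = 1.5$ simply marks the transition between what the natural mean-square bound yields on its own and what requires the sharper version.
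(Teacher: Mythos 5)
Your approach---interpolating from the $A=2$ endpoint via H\"older---is genuinely different from the paper's, which treats general $A\in[0,2]$ directly. The paper's engine is Lemma \ref{lemma:Subintervals}, a large-values counting estimate: for spaced points $t_r$ with $|E_2(t_r)|\geq V$ and spacing $\geq V(\log T)^{-a}(\log\log T)^b$, one has $R\ll T^2V^{-3}(\log T)^{a+9}(\log\log T)^{1-b/2}$. The $A$-th moment is then bounded by a dyadic decomposition in both the $t$-range and the threshold $V$, summing $V^A\cdot R\cdot(\text{spacing})\ll V^{A-2}T^2(\log T)^9(\log\log T)^{1+b/2}$ over dyadic $V$, and finally optimizing the free parameter $b$ as a function of $A$ (taking $b=2/3$ when $A\leq 1.5$, $b$ negative when $A>1.5$). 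Your H\"older step itself is correct, and the endpoint $A=2$ does give $\int_0^T E_2^2\,dt\ll T^2(\log T)^9(\log\log T)^{-C}$ in the paper.

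Two genuine problems with your proposal. First, the heavy lifting is entirely delegated: you assert the two mean-square estimates without indicating how they are proved, and the core difficulty of the theorem is exactly there. Moreover your stated mechanism for the $(\log\log T)^{-C}$ saving---``carrying the $(\log\log T)^{-C}$ pointwise decay of Theorem \ref{thm:fourthMoment} through the mean-square argument at the step where an $L^\infty$ estimate for $E_2$ is invoked''---is not what actually produces the saving. In the paper the $(\log\log T)^{-C}$ arises from the tunable spacing parameter $b$ in Lemma \ref{lemma:Subintervals}, propagated via a Cauchy--Schwarz step and the large-sieve inequality \cite[Theorem 3.1]{MotohashiB} (see \eqref{eq:Hj2Sum}); the pointwise exponent $-C$ of Theorem \ref{thm:fourthMoment} plays no direct role. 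So the missing ingredient is precisely Lemma \ref{lemma:Subintervals} and the dyadic bookkeeping, and your sketch does not reconstruct it.

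Second, your proposal is internally inconsistent with the theorem's statement in a way worth noticing. If the refined mean-square bound $\int_0^T E_2^2\,dt\ll T^2(\log T)^9(\log\log T)^{-C'}$ holds for every $C'\geq 0$ (which the paper does assert at $A=2$), then H\"older with $C'=2C/A$ already gives $\int_0^T|E_2|^A\,dt\ll T^{1+A/2}(\log T)^{9A/2}(\log\log T)^{-C}$ for \emph{every} $A\in(0,2]$, with no case split and strictly better than the $(\log\log T)^{2A/3}$ stated for $A\leq 1.5$. Your ``first'' mean-square bound with $(\log\log T)^{4/3}$ would then be redundant, and your explanation of the split at $A=1.5$ (``what the natural mean-square bound yields on its own'') does not hold up. The split in the paper instead reflects the competing constraints on the parameter $b$: for $A<1.5$ the exponent $1+b(A-1.5)$ of $\log\log T$ \emph{increases} as $b$ decreases, so one cannot take $b$ negative there and is forced to $b=2/3$ by the small-values contribution. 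So either you intend a weaker refined bound than stated (in which case you must say which), or you should note explicitly that your route, if the endpoint estimate survives, would in fact supersede the theorem's bound for $0<A\leq 1.5$; either way the proposal as written leaves a gap precisely at the endpoint mean-square estimate.
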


Theorem \ref{die} immediately gives us an estimate for the $k$th moment when $8\leq k \leq 12$.
\begin{cor}
\label{cor:12moment}
Let $k \in [8,12]$ and $C \geq 0$ any constant. Then 
\begin{equation*}
\int_{0}^{T} |\zeta(1/2 + it)|^{k}\, dt \ll T^{1+\frac{k-4}{8}} (\log{T})^{\frac{11k+4}{8}}
\begin{cases}
(\log\log{T})^{\frac{k-4}{6}}, &\text{if } 8\leq k\leq 10, \\
(\log\log{T})^{-C}, &\text{if } 10<k\leq 12, 
\end{cases}
\quad\text{as } T\to \infty.
\end{equation*}
\end{cor}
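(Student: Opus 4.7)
The plan is to apply Theorem~\ref{die} with $A=(k-4)/4 \in [1,2]$ after bridging from the $A$-th moment of $E_{2}$ to the $(4+4A)$-th moment of $\zeta(1/2+it)$ via the short-interval fourth-moment machinery of Iwaniec--Ivi\'{c}--Motohashi that is alluded to in the introduction as the source of the relation $C_{3}=8+C_{2}$.

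First I would record the parametric dictionary $A = (k-4)/4$, which sends $k \in [8,12]$ onto $A \in [1,2]$ and the breakpoint $k = 10$ onto $A = 3/2$; in particular, the two cases of the corollary come from the two cases of Theorem~\ref{die}. I would then invoke the bridging inequality
\begin{equation*}
\int_{0}^{T}|\zeta(1/2+it)|^{4+4A}\,dt \;\ll\; (\log T)^{6+A}\int_{0}^{T} E_{2}(t)^{A}\,dt + T^{1+A/2}(\log T)^{c},
\end{equation*}
valid for $A \in [1,2]$ and some absolute $c$. Its $A = 2$ case is exactly the inequality $\int_{0}^{T}|\zeta|^{12}\,dt \ll (\log T)^{8}\int_{0}^{T}E_{2}(t)^{2}\,dt$ that produces the Ivi\'{c}--Motohashi relation $C_{3}=8+C_{2}$; the intermediate range should follow either by running the same short-interval argument with the fractional weight $|\zeta|^{4A-4}$ in place of $|\zeta|^{4}$, or by interpolating between the endpoints $A=1$ and $A=2$.

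Granted this, substituting the bound from Theorem~\ref{die} into the bridge produces a $\log T$-exponent of $(6+A)+9A/2 = 6+11A/2$, which rewritten in terms of $k$ becomes $(11k+4)/8$; a $T$-exponent of $1+A/2 = 1+(k-4)/8$; and a $(\log\log T)$-factor inherited verbatim from Theorem~\ref{die}, equal to $(\log\log T)^{2A/3} = (\log\log T)^{(k-4)/6}$ in the first regime $A\in[1,3/2]$ and to $(\log\log T)^{-C}$ in the second regime $A\in(3/2,2]$. These are precisely the quantities appearing in the statement of the corollary.

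The main technical obstacle is establishing the bridging inequality with the exponent $6+A$ uniformly for $A\in[1,2]$: the $A=2$ endpoint is essentially classical, whereas extending to fractional $A$ requires tracking $\log T$ losses carefully through the short-interval fourth-moment estimate. Once that is in hand, the rest of the proof is the short computation above.
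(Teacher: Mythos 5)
Your parametric dictionary ($A=(k-4)/4$, breakpoint $k=10\leftrightarrow A=3/2$) and all of the exponent arithmetic ($T$-power $1+A/2$, $\log T$-power $(6+A)+9A/2=6+11A/2=(11k+4)/8$, $\log\log T$-power carried over from Theorem~\ref{die}) match the paper exactly. The difference lies in how the bridge from $\int_0^T E_2(t)^A\,dt$ to $\int_0^T|\zeta(1/2+it)|^k\,dt$ is justified. You reverse-engineer a conjectural inequality
\begin{equation*}
\int_0^T|\zeta(1/2+it)|^{4+4A}\,dt \ll (\log T)^{6+A}\int_0^T E_2(t)^A\,dt + T^{1+A/2}(\log T)^c,
\end{equation*}
and then explicitly flag proving it uniformly for $A\in[1,2]$ as ``the main technical obstacle,'' offering two possible strategies (fractional weights in the short-interval argument, or endpoint interpolation) without carrying either out.

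The paper's actual proof does not re-derive any such bridge: it simply invokes Theorem 4 of Ivi\'c--Motohashi \cite{IM94} with their parameter $k=2$ and $A=(k-4)/4$, which already provides exactly the required reduction with the right $\log T$-savings, feeds in $c(2,A)=A/2$ and $d(2,A)=9A/2$ from Theorem~\ref{die}, and then multiplies the final term of IM94's display (2.4) by the appropriate power of $\log\log T$ to account for the new factor in Theorem~\ref{die}. So the content of your bridging inequality is precisely the cited result; as a freestanding proposal, yours has a genuine gap because you treat this step as an open obstacle rather than a citation. Everything you deduce downstream of the bridge is correct, and once you replace the conjectural bridge with a reference to \cite[Theorem 4]{IM94} (and note the $\log\log T$ propagation), your argument becomes the paper's proof.
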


\begin{remark}
    If $k=12$, then Corollary \ref{cor:12moment} gives
    \begin{equation*}
        \int_{0}^{T} |\zeta(1/2 + it)|^{12}\, dt \ll T^{2} (\log{T})^{17}(\log\log{T})^{-C}
    \end{equation*}
    for any constant $C \geq 0$. This is a slight improvement to Heath-Brown's estimate \eqref{hochzeit}.  The bound provided in \cite[Theorem 7.2]{IvicB1983} for $k \in [8,12]$ is $\ll T^{1+(k-4)/8+\varepsilon}$ for any $\varepsilon>0$. We reprove this, and are able to keep track of the logarithmic powers.
\end{remark}

\begin{remark}
    Following the same structure as in the proofs of Theorem \ref{thm:fourthMoment} and Corollary \ref{cor:12moment}, we can prove that $\int_{0}^{T} |\zeta(1/2 + it)|^{k}\, dt \ll T^{k/6+\varepsilon}$ for all $k >12$ and any $\varepsilon>0$. However, as in the proof of Theorem 7.2 in \cite{IvicB1983}, we can state something stronger, namely, that for any $\varepsilon>0$
    \begin{equation*}
    \label{eq:highMoments}
        \int_{0}^{T} |\zeta(1/2 + it)|^{k}\, dt \ll T^{1+(k-6)c+\varepsilon}, \quad \text{as } T \to \infty,
    \end{equation*}
    where $|\zeta(1/2+i t)|\ll T^{c+\varepsilon}$ when $|t|\leq T$. By Bourgain \cite[Theorem 5]{Bourgain} we can choose $c=13/84$. 
\end{remark}

Central to the so-called spectral-theoretic approach are moments of the Hecke series $H_{j}(s)$ at the central point $s=1/2$. Indeed, 
let $\{\lambda_j=\kappa_j^2+1/4, \kappa_j>0\} \cup \{0\}$ be the discrete spectrum of the non-Euclidean Laplacian acting on $SL(2, \mathbb{Z})$. Moreover,
let $\alpha_j:=|\rho_j(1)|^2(\cosh{(\pi \kappa_j)})^{-1}$, where $\rho_j(1)$ is the first Fourier coefficient of the Maass wave form corresponding to $\lambda_j$ to which the Hecke series $H_j(s)$ is attached. We note that $H_j(1/2) \geq 0$ by \cite{KatokSarnak}, and from the theory of the Selberg zeta-function (see e.g. \cite[Equation (5.49)]{Ivic91}) it follows that $|\{\kappa_j: \kappa_j \leq T\}|=T^2/12+O(T)$. For a more detailed introduction to the Hecke series, see \cite{Ivic91}.

One requires upper bounds of the form
\begin{equation}\label{gluckliche}
\sum_{\kappa_{j}\leq K} \alpha_{j} H_{j}^{3}(1/2) \ll K^{2} \log^{A_{3}} K, \quad  \sum_{\kappa_{j}\leq K} \alpha_{j} H_{j}^{4}(1/2) \ll K^{2} \log^{A_{4}} K.
\end{equation}
Motohashi's method (see also \cite{Ivic14}) gives $C_{1} \leq (11 + 4A_{3})/6$. Indeed, Motohashi proved that $A_{3} \leq 8$, whence we arrived at his value for $C_{1}\leq 43/6 < 8$. Ivi\'c \cite{Ivic02} gave asymptotic expressions for the sums in (\ref{gluckliche}), whence one can take $A_{3} = 3$ and $A_{4} = 6$. We use these improvements, and some other refinements, to prove our results in the following sections.

\section{Proof of Theorem \ref{thm:fourthMoment}}
In this section, we prove Theorem \ref{thm:fourthMoment} by estimating $E_2(T)$ with the function
\begin{equation*}
    I(T,\Delta):= \frac{1}{\Delta \sqrt{\pi}} \int_{-\infty}^\infty \left|\zeta\left(1/2+i(T+t)\right)\right|^4e^{-(t/\Delta)^2} \, dt \quad (0<\Delta<T/\log{T}).
\end{equation*}
The following lemma describes the correspondence between $I(T, \Delta)$ and the fourth moment of the Riemann zeta-function. 

\begin{Lem}
\label{lemma:I4}
Let $\varepsilon \in [1/2,1)$, $T^\varepsilon\leq \Delta \leq T\exp(-\sqrt{\log{T}})$, $T_1:=T-\Delta(\log{T})^{1/2}$, $T_2:=2T+\Delta(\log{T})^{1/2}$, $T_3:=T+2\Delta(\log{T})^{1/2}$ and $T_4:=2T-2\Delta(\log{T})^{1/2}$.
Then
\begin{equation*}
    \int_{T_3}^{T_4} I(t,\Delta) \, dt+O(\log^4{T}) \leq \int_{T}^{2T}\left|\zeta(1/2+ix)\right|^4 \, dx \leq \int_{T_1}^{T_2} I(t,\Delta) \, dt+O(\log^4{T})  \quad\text{as } T\to \infty.
\end{equation*}
\end{Lem}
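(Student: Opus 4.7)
The plan is to apply Fubini, which reduces both inequalities to a pointwise comparison between a Gaussian-type kernel and the characteristic function of $[T,2T]$, and to absorb the resulting discrepancy into the $O(\log^{4} T)$ error using the classical Ingham--Titchmarsh bound $\int_{0}^{T} |\zeta(1/2+it)|^{4}\, dt \ll T \log^{4} T$.

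After Fubini and a linear change of variable in the Gaussian, we obtain
\begin{equation*}
 \int_{T_{i}}^{T_{j}} I(t, \Delta)\, dt = \int_{-\infty}^{\infty} |\zeta(1/2+iu)|^{4}\, K_{ij}(u)\, du, \qquad K_{ij}(u) := \frac{1}{\sqrt{\pi}} \int_{(T_{i}-u)/\Delta}^{(T_{j}-u)/\Delta} e^{-s^{2}}\, ds,
\end{equation*}
for $(i,j) \in \{(1,2),(3,4)\}$; each kernel is a difference of error functions and takes values in $[0,1]$.

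For the upper bound, the buffers in $T_{1} = T-\Delta(\log T)^{1/2}$ and $T_{2} = 2T+\Delta(\log T)^{1/2}$ ensure $K_{12}(u) \geq 1 - \mathrm{erfc}((\log T)^{1/2}) = 1 + O(T^{-1})$ for every $u \in [T,2T]$. Combined with the non-negativity of $K_{12}$ and $|\zeta|^{4}$, this gives
\begin{equation*}
 \int_{T}^{2T} |\zeta|^{4}\, du \leq \int_{T}^{2T} |\zeta|^{4} K_{12}(u)\, du + O(T^{-1})\int_{T}^{2T} |\zeta|^{4}\, du \leq \int_{T_{1}}^{T_{2}} I(t,\Delta)\, dt + O(\log^{4} T),
\end{equation*}
the final error being controlled by Ingham--Titchmarsh.

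For the lower bound, the larger offset $2\Delta(\log T)^{1/2}$ in $T_{3}, T_{4}$ forces $K_{34}(u) \ll \mathrm{erfc}(2(\log T)^{1/2}) \ll T^{-4}$ for every $u \notin [T,2T]$, while $K_{34} \leq 1$ on $[T,2T]$. Splitting $\int_{-\infty}^{\infty} |\zeta|^{4} K_{34}\, du$ accordingly: the $[T,2T]$ piece is at most $\int_{T}^{2T} |\zeta|^{4}\, du$; the near-tail $u \in [T/2, T] \cup [2T, 3T]$ contributes $O(T^{-4}\cdot T\log^{4} T) = o(1)$; and on the far tail $|u| \gg T$, any convexity bound on $|\zeta|^{4}$ is swamped by the super-polynomial Gaussian decay of $K_{34}$, which is strong by virtue of $\Delta \leq T\exp(-\sqrt{\log T})$. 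The whole argument is essentially bookkeeping once Fubini has been applied; the only genuine point of care is this final far-tail estimate, where one must verify that the Gaussian decay really dominates the polynomial growth of $|\zeta|^{4}$.
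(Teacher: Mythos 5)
Your argument is correct and coincides in essence with the paper's: both apply Fubini, observe that the Gaussian kernel is $1+O(T^{-1})$ on $[T,2T]$ for the upper bound and $\ll T^{-4}$ off $[T,2T]$ for the lower bound, and close by inserting the fourth-moment bound $\int_T^{2T}|\zeta(1/2+iu)|^4\,du \ll T\log^4 T$. The far-tail estimate you flag but do not carry out is done in the paper by substituting the pointwise bound $|\zeta(1/2+ix)|^4\ll |x|^{2/3}$ and using $\Delta\leq T\exp(-\sqrt{\log T})$ to get super-polynomial Gaussian decay --- a routine verification, as you anticipated.
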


\begin{proof}
We will first provide an upper bound and then a lower bound. We have
\begin{equation*}
    \int_{T_1}^{T_2} I(t,\Delta) \, dt\geq \int_{T}^{2T} \left|\zeta(1/2+ix)\right|^4\left(\frac{1}{\Delta\sqrt{\pi}}\int_{T_1}^{T_2} e^{-(t-x)^2/\Delta^2} \, dt\right) \, dx.
\end{equation*}
Let us first consider the inner integral. Changing the variable to $t-x=\Delta v$, in $T \leq t \leq 2T$, yields
\begin{align}
    \frac{1}{\Delta\sqrt{\pi}}\int_{T_1}^{T_2} e^{-(t-x)^2/\Delta^2} \, dt &= \frac{1}{\sqrt{\pi}}\int_{\frac{T-x}{\Delta}-(\log{T})^{1/2}}^{\frac{2T-x}{\Delta}+(\log{T})^{1/2}} e^{-v^2} \, dv \nonumber\\
    &=\frac{1}{\sqrt{\pi}}\int_{-\infty}^{\infty} e^{-v^2} \, dv+O\left(\int_{(\log{T})^{1/2}}^\infty e^{-v^2} \, dv\right) \nonumber\\
    &=1+O\left(e^{-\log{T}}\right) \label{eq:eBetween}.
\end{align}
Using \eqref{als}, we obtain
\begin{equation*}
    e^{-\log{T}}\int_{T}^{2T}\left|\zeta(1/2+ix)\right|^4 \, dx \ll T^{-1+1}\log^4{T}=\log^{4}{T},
\end{equation*}
which proves the upper bound.

The lower bound follows similarly using the formula
\begin{equation*}
    \int_{T_3}^{T_4} I(t,\Delta) \, dt=\left(\int_{T}^{2T}+\int_{2T}^{\infty}+\int_{-\infty}^{T} \right)\left(\left|\zeta(1/2+ix)\right|^4\left(\frac{1}{\Delta\sqrt{\pi}}\int_{T_3}^{T_4} e^{-(t-x)^2/\Delta^2} \, dt\right)\right) \, dx,
\end{equation*}
estimating this as in \eqref{eq:eBetween}, and noting that
\begin{align*}
    & \left(\int_{2T}^{\infty}+\int_{-\infty}^{T} \right)\left(\left|\zeta(1/2+ix)\right|^4\left(\frac{1}{\Delta\sqrt{\pi}}\int_{T_3}^{T_4} e^{-(t-x)^2/\Delta^2} \, dt\right)\right) \, dx \\
    &\quad\quad \ll \left(\int_{3T}^{\infty}+\int_{-\infty}^{-1}+\int_{2T}^{3T}+\int_{-1}^T \right)\left(\frac{1}{\Delta}\int_{T_3}^{T_4} |x|^{2/3} e^{-(t-x)^2/\Delta^2} \, dt\right) \, dx \\
    &\quad\quad \ll \int_{T_3}^{T_4} \Delta \left(\frac{e^{-(3T-T)^2/\Delta^2}}{T^{1/3}}+e^{-(t+1)^2/\Delta^2}\right)+T^{2/3+1}\left(e^{-(2T-t)^2/\Delta^2}+e^{-(t-T)^2/\Delta^2}\right) \, dt \\
    &\quad\quad \ll 1.
\end{align*}
\end{proof}

Now we use Lemma \ref{lemma:I4} to derive Theorem \ref{thm:fourthMoment}.

\begin{proof}[Proof of Theorem \ref{thm:fourthMoment}]
The proof follows similar ideas as in \cite[Section 5.2]{MotohashiB} and \cite{Ivic14} with some refinements. Namely, we will show that $E_2(2T)-E_2(T)$ is asymptotically smaller than the desired upper bound by using our updated Lemma \ref{lemma:I4}, cutting the sum $S(T, \Delta)$ in different points and using a new substitution for $\Delta$.

Using \eqref{als}, we have
\begin{equation*}
    E_2(2T)-E_2(T)=\int_{T}^{2T} |\zeta(1/2 + it)|^{4} \, dt+2T P_{4}(\log {(2T)}) -T P_{4}(\log T).
\end{equation*}
By Lemma \ref{lemma:I4} and \cite[(92) \& (93)]{Ivic14} for fixed $\varepsilon \in [1/2,1)$ and $T^\varepsilon \leq \Delta \leq T\exp(-\sqrt{\log{T}})$, the right-hand side is at most
\begin{equation}
\label{eq:ErrorBound}
     S\left(2T+\Delta(\log{T})^{1/2}, \Delta\right)-S\left(T-\Delta(\log{T})^{1/2}, \Delta\right)+O\left(\Delta \log^{4.5}{T}\right)+O\left(T^{1/2}\log^B{T}\right),
\end{equation}
where $B$ is a constant and 
\begin{equation*}
    S(T, \Delta):=\pi\sqrt{\frac{T}{2}}\sum_{j=1}^\infty \alpha_j\kappa_j^{-3/2}H_j^3(1/2)\cos\left(\kappa_j\log{\frac{\kappa_j}{4eT}}\right)e^{-\left(\frac{\Delta \kappa_j}{2T}\right)^2}.
\end{equation*}
Similarly, we get a lower bound where $S\left(2T+\Delta\log{T}, \Delta\right)-S\left(T-\Delta\log{T}, \Delta\right)$ is replaced by $S\left(2T-2\Delta\log{T}, \Delta\right)-S\left(T+2\Delta\log{T}, \Delta\right)$. Next, we will estimate the term $S(T, \Delta)$.

Let $A>0$ be sufficiently large. As in \cite[p. 37]{Ivic14}, the contribution from the part where $\kappa_j >AT\Delta^{-1}\sqrt{\log{T}}$ is $O(1)$. We divide the rest of the sum into parts $\kappa_j \leq AT\Delta^{-1}(\log\log{T})^{1/2}$ and $AT\Delta^{-1}(\log\log{T})^{1/2}<\kappa_j \leq AT\Delta^{-1}\sqrt{\log{T}}$. By partial summation and the first estimate in \eqref{gluckliche}, in the first case we have
\begin{align}
    &\sum_{\kappa_j \leq AT\Delta^{-1}(\log\log{T})^{1/2}} \alpha_j\kappa_j^{-3/2}H_j^3(1/2)\cos\left(\kappa_j\log{\frac{\kappa_j}{4eT}}\right)e^{-\left(\frac{\Delta \kappa_j}{2T}\right)^2} \nonumber\\
    &\quad\quad \ll \left(T\Delta^{-1}(\log\log{T})^{1/2}\right)^{1/2}\left(\log{(T\Delta^{-1})}\right)^{A_3}+\int_1^{T\Delta^{-1}(\log\log{T})^{1/2}} \frac{\log^3{u}}{u^{1/2}} \, du \nonumber\\
    &\quad\quad \ll \left(T\Delta^{-1}(\log\log{T})^{1/2}\right)^{1/2}\left(\log{(T\Delta^{-1})}\right)^{A_3} \label{eq:SFirst}.
\end{align}
Similarly, the second sum is
\begin{align}
    &\sum_{ AT\Delta^{-1}(\log\log{T})^{1/2}<\kappa_j \leq AT\Delta^{-1}\sqrt{\log{T}}} \alpha_j\kappa_j^{-3/2}H_j^3(1/2)\cos\left(\kappa_j\log{\frac{\kappa_j}{4eT}}\right)e^{-\left(\frac{\Delta \kappa_j}{2T}\right)^2} \nonumber\\
    &\quad\quad \ll \frac{\left(T\Delta^{-1}\sqrt{\log{T}}\right)^{1/2}\left(\log{(T\Delta^{-1})}\right)^3}{T^{A^2/4}}+\left(T\Delta^{-1}(\log\log{T})^{1/2}\right)^{1/2}\left(\log{(T\Delta^{-1})}\right)^{A_3-1/4} \nonumber\\
    &\quad\quad\quad+\left(\frac{\Delta}{T}\right)^2\int_{AT\Delta^{-1}(\log\log{T})^{1/2}}^{ AT\Delta^{-1}\sqrt{\log{T}}} \frac{u^{3/2}(\log{u})^{A_3}}{e^{\left(\frac{\Delta u}{2T}\right)^2}} \, du \nonumber \\
    &\quad\quad \ll \left(T\Delta^{-1}(\log\log{T})^{1/2}\right)^{1/2}\left(\log{(T\Delta^{-1})}\right)^{A_3}+\left(T\Delta^{-1}\right)^{1/2}\left(\log{T}\right)^{5/4} \label{eq:SSecond}
\end{align}
for sufficiently large $A$.
Setting $\Delta=T^{2/3}(\log{T})^B (\log\log{T})^D$ for some constants $B_1, B_2$ to \eqref{eq:SFirst} and \eqref{eq:SSecond}, we obtain
\begin{equation*}
    S(T, \Delta) \ll T^{2/3}\left((\log{T})^{A_3-B_1/2}(\log\log{T})^{1/4-B_2/2}+(\log{T})^{5/4-B_1/2}(\log\log{T})^{-B_2/2}\right).
\end{equation*}
Next, we optimize the constants $B_1$ and $B_2$.

The term $\Delta\log^{4.5}{T}$ in \eqref{eq:ErrorBound} contributes the term $T^{2/3}(\log{T})^{4.5+B_1}$. By \cite[Theorem 1]{Ivic02} we have $A_3=3$. Thus we choose $B_1=-1$ and $B_2=1/2-2C$. Combining the estimates to the bound \eqref{eq:ErrorBound}, we can conclude that $E_2(2T)-E_2(T)\ll T^{2/3} \left(\log{T}\right)^{3.5}/(\log\log{T})^C$.
\end{proof}

\begin{remark}
    Using the method above, the potential improvements for $E_2(T)$ should come improving the estimate for $S(T, \Delta).$ 
    We also note that choosing an exponent less than $1/2$ for $\log{T}$ in $T_1,\ldots, T_4$ in Lemma \ref{lemma:I4} induces an error term $O(T)$ in estimate \eqref{eq:ErrorBound}, which would be too large.
\end{remark}

\section{Proofs of Theorem \ref{die} and Corollary \ref{cor:12moment}}
We first need to estimate an integral involving the $\Gamma$-function.
\begin{Lem}
\label{lemma:gammaIntegral}
    Let $\omega:=1/\log{T}$. Then 
    \begin{equation*}
        \int_{(\omega)} \left|\Gamma(z)\right| \, dz \ll \log\log{T} \quad\text{as } T \to \infty.
    \end{equation*}
\end{Lem}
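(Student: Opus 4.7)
The plan is to exploit the fact that $\Gamma(z)$ has only one bad feature on the vertical line $\operatorname{Re}(z)=\omega$ (close to the simple pole at $z=0$), while everywhere else it decays rapidly by Stirling's formula. Parameterise the contour by $z=\omega+it$ with $t\in\mathbb{R}$ and split the integral at $|t|=1$ (any fixed cut works equally well).

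For the tail $|t|\geq 1$, I would invoke Stirling's formula in the form
\begin{equation*}
|\Gamma(\sigma+it)| \ll |t|^{\sigma-1/2} e^{-\pi|t|/2}, \qquad |t|\geq 1,\ 0<\sigma\leq 1,
\end{equation*}
applied with $\sigma=\omega\leq 1$. The resulting integrand is integrable with bound independent of $T$, so the tail contributes $O(1)$.

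The main contribution is from $|t|\leq 1$. Here I would use the Laurent expansion $\Gamma(z)=1/z-\gamma+O(z)$ at the origin, which gives $|\Gamma(\omega+it)|\leq |\omega+it|^{-1}+O(1)=(\omega^2+t^2)^{-1/2}+O(1)$ uniformly for $|t|\leq 1$ and all sufficiently large $T$. A direct computation then yields
\begin{equation*}
\int_{-1}^{1} \frac{dt}{\sqrt{\omega^2+t^2}} = 2\sinh^{-1}(1/\omega) = 2\log\bigl(1/\omega+\sqrt{1+1/\omega^2}\bigr) \ll \log(1/\omega) = \log\log T.
\end{equation*}
Adding the two ranges and recalling $\omega=1/\log T$ gives the claimed bound $\int_{(\omega)}|\Gamma(z)|\,dz\ll\log\log T$.

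There is essentially no obstacle: the only subtlety is ensuring that the elementary Laurent expansion (not Stirling) is used on $|t|\leq 1$, since Stirling's estimate degrades as $\sigma+it\to 0$; conversely Stirling must be used on $|t|\geq 1$ because the $1/z$ approximation is wasteful there. The splitting point $|t|=1$ makes both estimates valid and clean, and the logarithm arises solely from integrating $1/\sqrt{\omega^2+t^2}$ across the (mollified) pole.
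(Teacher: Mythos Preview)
Your proposal is correct and follows essentially the same approach as the paper: split the line at $|t|=1$, use Stirling's formula for the tail to get an $O(1)$ contribution, and use the Laurent expansion $\Gamma(z)=1/z+O(1)$ near the origin to reduce the central piece to $\int_{-1}^{1}(\omega^{2}+t^{2})^{-1/2}\,dt\ll\log(1/\omega)=\log\log T$. The only cosmetic difference is that the paper first restricts to $t\geq 0$ by the symmetry $|\Gamma(z)|=|\Gamma(\bar z)|$ and writes the antiderivative in logarithmic rather than $\sinh^{-1}$ form.
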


\begin{proof}
    Since $|\Gamma(z)|=|\Gamma(\overline{z})|$, it is sufficient to consider non-negative imaginary parts. By \cite[Equation (4.12.2)]{Titchmarsh}
    \begin{equation*}
        \int_1^\infty \left|\Gamma(\omega+i x)\right| \, dx=\int_1^\infty x^{\omega-1/2}e^{-\pi x/2}\left(1+O\left(\frac{1}{x}\right)\right) \, dx \ll \int_1^\infty e^{-\pi x/2} \, dx \ll 1
    \end{equation*}
    for large enough $T$. Moreover, a Laurent series expansion shows that $\Gamma(z)=1/z+O(1)$ when $|z|$ is bounded. Hence, we have
    \begin{align*}
        \int_0^1 \left|\Gamma(\omega+i x)\right| \, dx\leq \int_0^1 \frac{1}{|\omega+ix|} \, dx+O(1)\ll \log{\left(\frac{2\sqrt{\omega^2+1}+2}{\omega^2}+1\right)}+O(1) \ll \log\log{T}
    \end{align*}
    as $T \to \infty$.
\end{proof}

The idea behind the proof of Theorem \ref{die} is to show that the function $E_2(t)$ is not often very large. This is done in the following lemma.

\begin{Lem}
\label{lemma:Subintervals}
    Let $T \leq t_1 <t_2<\ldots <t_R \leq 2T$ be points such that $t_{r+1}-t_r \geq V(\log{T})^{-a}(\log\log{T})^{b}$ and $|E_2(t_r)| \geq V$ for all $r=1,\ldots, R-1$, where
    \begin{equation*}
        T^{1/2} (\log T)^{c}(\log\log{T})^b \leq V \leq T^{2/3} (\log T)^{d}
    \end{equation*}
    and $a,b,c, d$ are constants with $a>6$.
    Then
    \begin{equation}
    \label{eq:R}
        R \ll T^2V^{-3}(\log{T})^{a+9}(\log\log{T})^{1-b/2}.
    \end{equation}
\end{Lem}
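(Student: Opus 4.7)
The plan is to reduce the upper bound on $R$ to the mean-square estimate $\int_T^{2T} E_2(t)^2 \, dt \ll T^2 (\log T)^9$ (the $C_2=9$ result established elsewhere in the paper) via a smoothing argument whose $(\log\log T)$ cost is controlled by Lemma \ref{lemma:gammaIntegral}.

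I would start from the elementary chain $RV^2 \le (RV)^2/R \le (\sum_r |E_2(t_r)|)^2/R \le \sum_r |E_2(t_r)|^2$, so the lemma reduces to showing
$$\sum_{r=1}^R |E_2(t_r)|^2 \ll T^2 V^{-1} (\log T)^{a+9} (\log\log T)^{1-b/2}.$$
Morally, the separation hypothesis $t_{r+1}-t_r \ge \delta := V(\log T)^{-a}(\log\log T)^b$ should yield $\sum_r |E_2(t_r)|^2 \ll \delta^{-1} \int_T^{2T} |E_2(t)|^2 \, dt$, which together with the $C_2 = 9$ bound would furnish a bound of the required shape. The real content of the lemma is to justify this reduction rigorously, since $E_2(t)$ can oscillate rapidly (its derivative involves $|\zeta(1/2+it)|^4$) and one has no pointwise monotonicity.

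To make the reduction rigorous I would introduce a Gaussian smoothing $\tilde E_2(t_r) := (\Delta\sqrt\pi)^{-1}\int_{-\infty}^\infty E_2(t_r+u) e^{-u^2/\Delta^2} \, du$ at a scale $\Delta$ slightly smaller than $\delta$, use $E_2(t_r)^2 \le 2\tilde E_2(t_r)^2 + 2(E_2 - \tilde E_2)(t_r)^2$, and estimate the two parts separately. The smoothed part sums by Cauchy--Schwarz and the $C_2=9$ mean-square bound to the desired main term. For the difference $E_2 - \tilde E_2$, I would follow Motohashi's Mellin-transform approach, writing the difference as a contour integral along $\Re(s) = 1/\log T$ whose integrand carries a $\Gamma(s)$ weight. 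Lemma \ref{lemma:gammaIntegral} then replaces what would be a $\log T$ loss by a $\log\log T$ loss, accounting for the $(\log\log T)^{1-b/2}$ exponent in the target bound --- the $-b/2$ emerging from a Cauchy--Schwarz absorption of half of the separation's $(\log\log T)^b$.

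The principal obstacle is the careful bookkeeping of logarithmic and double-logarithmic factors: one must choose $\Delta$ so that the smoothing error (whose size is controlled by the Gamma integral) is balanced against the main term, and one must verify that the hypothesis $a > 6$ is exactly what is required for the residual contributions from the Mellin shift --- and from the error terms $O(\Delta \log^{4.5} T)$ and $O(T^{1/2} \log^B T)$ encountered in the spectral decomposition \eqref{eq:ErrorBound} --- to be dominated by $T^2 V^{-1}(\log T)^{a+9}(\log\log T)^{1-b/2}$.
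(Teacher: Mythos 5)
There is a fundamental circularity in your plan: the mean-square estimate $\int_T^{2T} E_2(t)^2\,dt \ll T^2(\log T)^9$ is not ``established elsewhere in the paper'' independently --- it \emph{is} Theorem \ref{die} with $A=2$, and the proof of Theorem \ref{die} is built directly on Lemma \ref{lemma:Subintervals}. You cannot use the conclusion to prove the lemma. (The only mean-square bound available a priori is Motohashi's $C_2=22$, which would cost an extra $(\log T)^{13}$ and give a much weaker lemma.) Even granting an independent $C_2 = 9$, your proposed smoothing argument does not make the reduction rigorous: since $E_2'$ involves $|\zeta(1/2+it)|^4 - P_4(\log T) - P_4'(\log T)$ and can be large pointwise, $|E_2(t_r)| \geq V$ gives no lower bound on the smoothed $\tilde{E}_2(t_r)$ (smoothing averages out local spikes rather than preserving them), so the split $E_2 = \tilde{E}_2 + (E_2 - \tilde{E}_2)$ doesn't push $\sum_r E_2(t_r)^2$ onto $\int E_2^2$.

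The paper's argument is structurally different and avoids both problems. It never touches $\int E_2^2$. Instead it bounds $RV \leq \sum_r E_2(t_r)$ by a dyadic telescope $E_2(t_r) = \sum_{l=1}^L \bigl( E_2(t_r 2^{1-l}) - E_2(t_r 2^{-l}) \bigr) + O(T^{(1-\varepsilon_1)/2})$, applies the Ivi\'c--Motohashi spectral formula for each difference to produce sums $S(\cdot,\Delta)$ over the Hecke eigenvalues $\kappa_j$, and --- this is the key ingredient you are missing --- uses Motohashi's spectral large sieve (his Theorem 3.1) in estimate \eqref{eq:Hj2Sum}. The separation hypothesis $t_{r+1}-t_r \geq V(\log T)^{-a}(\log\log T)^b$ enters \emph{only} there, as the spacing condition in the large-sieve inequality for $\sum_j \alpha_j H_j(1/2)^2 |\sum_r \tau(r,l)^{1/2+2z+i\kappa_j}|^2$. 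The $\Gamma$-integral Lemma \ref{lemma:gammaIntegral} does appear, roughly where you guessed, to yield the single factor of $\log\log T$; but without the telescope and the spectral large sieve there is no route from the separation hypothesis to a bound of the required strength, and the condition $a>6$ in fact governs the auxiliary parameter $g$ in $\Delta = V(\log T)^{-g}(\log\log T)^{-b}$ so that the $O(R\Delta\log^5 T)$ error is absorbed, not the Mellin shift.
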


\begin{proof}
Let us first choose $c(\varepsilon_1)$ is such a way that for any given small $\varepsilon_1>0$ we have $t_r 2^{-L} \asymp T^{3/4-\varepsilon_1}$, where $L:=[c(\varepsilon_1)\log{T}]$. Moreover, let us also set $\Delta:= V(\log T)^{-g}(\log\log{T})^{-b}$ for some constant $g>5$ and $g \leq a-1$. By the definitions of $R$ and $V$ and Theorem \ref{thm:fourthMoment}, we have
\begin{equation*}
    RV \leq \sum_{r=1}^R E_2(t_r) =\sum_{r=1}^R\sum_{l=1}^L \left(E_2\left(t_r 2^{1-l}\right)-E_2\left(t_r2^{-l}\right)\right)+O\left(RT^{(1-\varepsilon_1)/2}\right).
\end{equation*}
By \cite[p. 36]{Ivic14} and \cite[Equation (1.13)]{IvicM95}, the right-hand side is
\begin{align}
    &=\sum_{r=1}^R\sum_{l=1}^L \left(S(2^{1-l}t_r+\Delta\log{(t_r 2^{-l})}, \Delta)-S(2^{-l}t_r-\Delta\log{(t_r 2^{-l})}, \Delta)\right) \nonumber \\
    &\quad\quad+O\left(R\Delta \log^{5}{T}\right)+O\left(RT^{1/2}\right). \label{eq:RV}
\end{align}
Using $\Delta= V(\log T)^{-g}(\log\log{T})^{b}$, the second last term can be written as 
\begin{equation*}
    O\left(RV \log^{5-g}{T}(\log\log{T})^{b}\right).
\end{equation*}
Thus, we concentrate on estimating the sums in terms of $S$.

We set $\tau(r,l):=t_r 2^{-l}-\Delta \log{(t_r 2^{-l})}$. Similarly as in \cite[p. 317]{IM94}, we can conclude that
\begin{align*}
    & \sum_{r=1}^R S\left(\tau(r,l), \Delta\right)=2^{-1/2}\pi \sum_{r=1}^R \tau(r,l)^{1/2}\sum_{\kappa_j \leq T\Delta^{-1}\log{T}} \alpha_j \kappa_j^{-3/2} H_j(1/2)^3\cos{\left(\kappa_j\log{\frac{4 e \tau(r,l)}{\kappa_j}}\right)} \cdot \\
    &\quad\quad \cdot \frac{1}{2 \pi i} \int_{(\omega)} \Gamma(z)\left(\frac{2\tau(r,l)}{\Delta \kappa_j}\right)^{2z} \, dz +o(1),
\end{align*}
where $\omega:=1/\log{T}$. Changing the order of the summation and setting $U:=T\Delta^{-1}\log{T}$, we obtain
\begin{equation*}
    \sum_{r=1}^R S\left(\tau(r,l), \Delta\right) \ll 1+\left(1+\sum_{m=1}^{\log_2{(U/2)}}\right)\int_{(\omega)} \left|\Gamma(z)\right| \sum_{2^{-m}U \leq \kappa_j \leq 2^{1-m}U} \frac{\alpha_j H_j(1/2)^3}{(2^{-m}U)^{3/2}} \left|\sum_{r=1}^R \tau(r,l)^{1/2+2z+i\kappa_j}\right|\, dz.
\end{equation*}
We will now estimate the sum over $2^{-m}U \leq \kappa_j \leq 2^{1-m}U$ and then combine the estimates.

We use the Cauchy--Schwarz inequality to show
\begin{align}
     & \sum_{K \leq \kappa_j \leq 2K} \frac{\alpha_j H_j(1/2)^3}{K^{3/2}} \left|\sum_{r=1}^R \tau(r,l)^{1/2+2z+i\kappa_j}\right| \ll K^{-3/2}\left(\sum_{K< \kappa_j \leq 2K} \alpha_j H_j(1/2)^4\right)^{1/2} \cdot \nonumber\\
     &\quad\quad \cdot \left(\sum_{K< \kappa_j \leq 2K} \alpha_j H_j(1/2)^2 \left|\sum_{r=1}^R \tau(r,l)^{1/2+2z+i\kappa_j}\right|^2 \right)^{1/2}. \label{eq:spectral2}
\end{align}
By the second estimate in (\ref{gluckliche}) we get
\begin{equation}
\label{eq:Hj4Sum}
    \sum_{K< \kappa_j \leq 2K} \alpha_j H_j(1/2)^4 \ll K^2\log^6{K}.
\end{equation}
Moreover, since by assumptions $|t_{r+1}-t_r| \geq V(\log{T})^{-a}(\log\log{T})^b$ and $\left|\tau(r,l)^{1/2+2z+i\kappa_j}\right|\ll 2^{-l/2}T^{1/2}$, we can apply \cite[Theorem 3.1]{MotohashiB}, and get
\begin{equation}
\label{eq:Hj2Sum}
    \sum_{K< \kappa_j \leq 2K} \alpha_j H_j(1/2)^2 \left|\sum_{r=1}^R \tau(r,l)^{1/2+2z+i\kappa_j}\right|^2  \ll R(K+TV^{-1}(\log{T})^{a}(\log\log{T})^{-b})2^{-l}TK\log{K}.
\end{equation}
Hence, for $K \ll T\Delta^{-1}\log{T} \ll TV^{-1}(\log{T})^a(\log\log{T})^{-b}$, the right-hand side of \eqref{eq:spectral2} is
\begin{equation*}
    \ll 1+2^{-l/2}R^{1/2}TV^{-1/2}(\log{T})^{a/2}(\log\log{T})^{-b/2}\sum_{m=1}^{\log_2{(U/2)}} (\log{(2^{-m}U)})^{3.5}\int_{(\omega)} \left|\Gamma(z)\right| \, dz.
\end{equation*}
Since $\log{(2^{-m}U)}\asymp \log{T}$ for all $m$, by Lemma \ref{lemma:gammaIntegral} the right-hand side is
\begin{equation*}
    \ll  1+2^{-l/2}R^{1/2}TV^{-1/2}(\log{T})^{a/2+4.5}(\log\log{T})^{1-b/2}.
\end{equation*}
Hence, keeping in mind that $a-1\geq g > 5$, the claim follows using estimate \eqref{eq:RV}.
\end{proof}

\begin{proof}[Proof of Theorem \ref{die}]
If
\begin{equation*}
    |E_2(t)| \leq T^{1/2}(\log{T})^{9/2}(\log\log{T})^{b},
\end{equation*}
where $b=2/3$ if $A\leq 1.5$ and $b=\frac{-1-C}{A-1.5}$ if $A > 1.5$, then the claim clearly holds. Hence, let us assume that this is not the case. We shall divide the integral from $0$ to $T$ into sub-integrals from $2^{-m}T$ to $2^{1-m}T$ for $m=1,2,\ldots$. We will consider a more general case and hence for each $m$, let $\mathfrak{I}_m$ denote the subset of $[2^{-m}T,2^{1-m}T]$ where 
$$ 
|E_2(t)| \geq (2^{-m} T)^{1/2}\cdot
(\log{(2^{-m}T)})^{9/2}(\log\log{(2^{-m}T)})^{b}. 
$$
We note that the upper bound for $|E_2(t)|$ is provided by Theorem \ref{thm:fourthMoment}. Moreover, we divide each interval $[2^{-m}T,2^{1-m}T]$ into sub-intervals of length $V(\log{(2^{-m}T)})^{-7}(\log\log{(2^{-m}T)})^{b}$. By Lemma \ref{lemma:Subintervals} we obtain
\begin{align*}
& \int_{0}^{T} E_{2}(t)^{2}\, dt \ll \sum_{m=1}^{\infty} \int_{\mathfrak{I}_m(V)} E_{2}(t)^{2}\, dt \\
 &\quad\quad\ll \sum_{m=1}^{\infty} V^A\cdot V(\log{(2^{-m}T)})^{-7}(\log\log{(2^{-m}T)})^{b}(2^{-m}T)^2V^{-3}(\log{(2^{-m}T)})^{7+9}(\log\log{T})^{1-b/2} \\
 &\quad\quad \ll T^{1+A/2} (\log{T})^{9A/2}(\log\log{T})^{1+(A-1.5)b}.
\end{align*}
We obtain the result by substituting the value of $b$.
\end{proof}

We note that it is easy to use Theorem \ref{die} to produce Corollary \ref{cor:12moment}.

\begin{proof}[Proof of Corollary \ref{cor:12moment}]
    The claim follows from \cite[Theorem 4]{IM94} with $k=2$ and $A=(k-4)/4$, and hence by Theorem \ref{thm:fourthMoment} with $c(k,A)=A/2$ and $d(k,A)=9A/2$. Note that we have to multiply the last term in \cite[(2.4)]{IM94} with $\log\log{T}$ (with the suitable power) because of the respective factor in Theorem \ref{die}. This gives us the extra $\log\log{T}$ term in Corollary \ref{cor:12moment}.
\end{proof}

\section{Discussion, extension, and future work}
It may be possible to make further improvements to Heath-Brown's 12th moment estimate in \eqref{hochzeit} by improving the $(\log{T})^9$ in \eqref{eq:R} or improving the result in Theorem 4 in \cite{IM94}. We identify the following avenues for potential improvement.
\begin{itemize}
    \item Is it possible to improve estimate \eqref{eq:Hj2Sum}? Note that the bound in \eqref{eq:Hj4Sum} is optimal.
    \item In \cite[Equation (4.5)]{IM94}, there is the estimate
    \begin{equation*}
        \left|\zeta(1/2+it)\right|^{2k} \ll (\log{t})\left(1+\int_{t-1/3}^{t+1/3} \left|\zeta(1/2+it + iu)\right|^{2k} \, du \right).
    \end{equation*}
    It is not clear to us that the $\log{t}$ term, which comes from growth estimates of $\zeta(1/2 + it)$, can be reduced. 
\end{itemize}

We note that Heath-Brown \cite{RHBMean} used the 12th power moment to prove $$\int_{0}^{T} |\zeta(5/8 + it)|^{8}\, dt \ll T (\log T)^{38},$$ and in \cite{RHBZero} to improve on certain zero-density results. We expect that these results could be improved very slightly using our work. Similarly, Graham \cite{Graham} (see also \cite[p.\ 179]{Titchmarsh}) adapted Heath-Brown's method to prove $$\int_{0}^{T} |\zeta(5/7 + it)|^{196}\, dt \ll T^{14} (\log T)^{425}.$$ We note (very briefly) the 12th power moment for Dirichlet $L$-functions, also with logarithmic powers, in Motohashi \cite{MotohashiV3}, improving work by Meurman \cite{Meurman84}.

While we have not pursued potential improvements to these results, we do list one final result indirectly related to this work. 

Let $a(n)$ denote the number of non-isomorphic Abelian groups of order $n$, and let $A(x) = \sum_{n\leq x} a(n)$. The problem is to estimate $\Delta(x)$, the error term in the approximation $A(x) = \sum_{k=1}^{5}c_{k}x^{1/k} + \Delta(x)$.

Ivi\'{c} \cite{IvicOmega} proved $\int_{1}^{X} \Delta(x)^{2} \, dx = \Omega(X^{4/3} \log X)$ --- see also \cite{Balu}. Heath-Brown \cite{Aster} gave the following upper bound that is sharp up to logarithmic powers
\begin{equation}\label{barge}
\int_{1}^{X} \Delta(x)^{2} \, dx \ll X^{4/3} (\log X)^{89}.
\end{equation}
One of the ingredients in this proof is indeed the 12th power moment. Initially we had hoped for a larger improvement in our Corollary \ref{cor:12moment}, and hence a chance to improve on this result. Indeed, since Heath-Brown noted that he had `made no attempt to obtain a good exponent for the power of $\log X$' in (\ref{barge}), we had hoped that a careful analysis of the role of the 12th power moment could reduce the power of 89 in (\ref{barge}) substantially. 

Heath-Brown's proof requires careful estimation of mean values of various Dirichlet polynomials. This analysis is done by splitting the ranges of summation into four cases and by frequently using the following fact. If $A_{i}, \alpha_{i} \geq 0$ and $\sum \alpha_{i} = 1$, then $\min\{A_{1}\, \ldots, A_{k}\} \leq A_{1}^{\alpha_{1}}\cdots A_{k}^{\alpha_{k}}$. Where powers of $\log X$ are concerned, Heath-Brown simply bounds the minimum of the $A_{i}$'s by the \textit{maximum} of the $A_{i}$'s. By retaining the exponents at each step we make a slight, cheap improvement. So, for example, the exponent of 65 in \cite[(3.1)]{Aster} can be reduced to $77/2$. This actually comes from Heath-Brown's `Case 1' (the only case not to invoke the 12th power moment), and is the largest of the four estimates. A quick examination of the proof shows that
\begin{equation}\label{barge2}
\int_{1}^{X} \Delta(x)^{2} \, dx \ll X^{4/3} (\log X)^{62.5}.
\end{equation}
Splitting into slightly different cases could bring 12th power moment estimate to the fore, in which case future improvements of Corollary \ref{cor:12moment}, could reduce the estimate in (\ref{barge2}) still further.


\section*{Acknowledgements}
We are grateful to Aleksander Simoni\v{c} for the discussions concerning the manuscript. 



\begin{thebibliography}{1}

\bibitem{Balu}
R. Balasubramanian and K. Ramachandra.
\newblock Some problems of analytic number theory III.
\newblock {\em Hardy-Ramanujan J.}, 4:13--40, 1981.

\bibitem{Bourgain}
J. Bourgain.
\newblock Decoupling, exponential sums and the Riemann zeta function. 
\newblock {\em J. Amer. Math. Soc.}, 30(1):205--224, 2017.

\bibitem{Graham}
S. W. Graham.
\newblock
Large values of the Riemann zeta-function. Topics in analytic number theory (Austin, Tex., 1982), 98--126, Univ. Texas Press, Austin, TX, 1985.

\bibitem{RHB12}
D. R. Heath-Brown.
\newblock The twelfth power moment of the Riemann zeta-function.
\newblock {\em Quart. J. Math. Oxford Ser. (2)}, 29(116):443--462, 1978.

\bibitem{RHB4}
D.R. Heath-Brown.
\newblock The fourth power moment of the Riemann zeta function.
\newblock {\em Proc. London Math. Soc. (3)}, 38(3):385--422, 1979.

\bibitem{RHBZero}
D. R. Heath-Brown.
\newblock Zero density estimates for the Riemann zeta-function and Dirichlet $L$-functions.
\newblock {\em J. London Math. Soc. (2)}, 19(2):221--232, 1979.

\bibitem{RHBMean}
D. R. Heath-Brown.
\newblock Mean values of the zeta function and divisor problems.
\newblock Recent progress in analytic number theory, Vol. 1 (Durham, 1979), pp.\ 115--119, Academic Press, London--New York, 1981.



\bibitem{Aster}
D. R. Heath-Brown.
\newblock The number of Abelian groups of order at most $x$.
\newblock {\em Ast\'{e}risque}, 198-199-200:153--163, 1991.


\bibitem{IvicB1983}
A. Ivi\'{c}.
\newblock Topics in recent zeta function theory. Publ. Math. d'Orsay, Université de Paris-Sud, Dép. de Mathématique, 1983, 272 pages.

\bibitem{IvicOmega}
A. Ivi\'{c}.
\newblock The general divisor problem.
\newblock {\em J. Number Theory}, 26:73--91, 1987.


\bibitem{Ivic91}
A. Ivi\'{c}.
\newblock{Lectures on mean values of the Riemann zeta function}. Tata Institute of Fundamental Research Lectures on Mathematics and Physics, 92. Springer-Verlag, Berlin, 1991.


\bibitem{Ivic02}
A. Ivi\'{c}.
\newblock On the moments of Hecke series at central points.
\newblock {\em Funct. Approx. Comment. Math.}, 30:49--82, 2002.

\bibitem{IvicB}
A. Ivi\'{c}.
\newblock The Riemann Zeta-function. Theory and applications. Dover Publications, Inc. Mineola, NY, 2003.




\bibitem{IvicHardy}
A. Ivi\'{c}.
\newblock The Theory of Hardy's $Z$-function.
\newblock Cambridge Tracts in Mathematics, 196. Cambridge University Press, Cambridge, 2013.

\bibitem{Ivic14}
A. Ivi\'{c}.
\newblock The mean values of the Riemann zeta-function on the critical line.
\newblock{Analytic number theory, approximation theory, and special functions,} pp.\ 3--68, Springer, New York, 2014.










\bibitem{IvicM90}
A. Ivi\'{c} and Y. Motohashi.
\newblock A note on the mean value of the zeta and $L$-functions VII.
\newblock{ \em Proc. Japan Acad. Ser. A.}, 66:150--152, 1990.


\bibitem{IM94}
A. Ivi\'{c} and Y. Motohashi.
\newblock The mean square of the error term for the fourth power moment of the zeta-function.
\newblock{ \em Proc. London Math. Soc. (3)}, 69(2):309--329, 1994.

\bibitem{IvicM95}
A. Ivi\'{c} and Y. Motohashi.
\newblock On the fourth power moment of the Riemann zeta-function.
\newblock{\em J. Number Theory}, 51(1):16--45, 1995.


\bibitem{Iwaniec}
H. Iwaniec.
\newblock Fourier coefficients of cusp forms and the Riemann zeta-function.
\newblock Seminar on Number Theory, 1979-1980 (French), Exp. No. 18, 36pp., Univ. Bordeaux I, Talence, 1980.







\bibitem{KatokSarnak}
S. Katok and P. Sarnak. 
\newblock
Heegner points, cycles and Maass forms.
\newblock
{\em Israel J. Math.} 84:193--227, 1993. 



\bibitem{Meurman84}
T. Meurman.
The mean twelfth power of Dirichlet $L$-functions on the critical line.
\newblock {\em Ann. Acad. Sci. Fenn. Ser. A I Math. Dissertationes}, 54:44pp., 1984.

\bibitem{MotohashiV3}
Y. Motohashi.
A note on the mean value of the zeta and $L$-functions. III.
\newblock {\em Proc. Japan Acad. Ser. A Math. Sci.}, 62(4):152--154, 1986.

\bibitem{MotohashiB}
Y. Motohashi.
\newblock Spectral Theory of the Riemann Zeta-function. Cambridge Tracts in Mathematics, 127. Cambridge University Press, Cambridge, 1997.






\bibitem{Sank}
A. Sankaranararyanan.
\newblock Higher moments of certain $L$-functions on the critical line.
\newblock{\em Lithuanian Math. J.}, 47(3), 277--310, 2007.

\bibitem{Titchmarsh}
E. C. Titchmarsh
\newblock The Theory of The Riemann Zeta-Function. Second Revised (Heath-Brown) Edition. Oxford University Press, Oxford, 1986.
\end{thebibliography}
 \end{document}